\documentclass[11pt]{amsart}
\usepackage[margin=1.15in]{geometry}
\usepackage{amsmath,amssymb,amsthm}
\usepackage{booktabs}
\usepackage{algorithm}
\usepackage{algpseudocode}
\usepackage{xcolor}
\usepackage[colorlinks=true,linkcolor=blue,citecolor=blue,urlcolor=blue]{hyperref}
\usepackage{microtype}
\usepackage{graphicx}

\newtheorem{theorem}{Theorem}[section]
\newtheorem{lemma}[theorem]{Lemma}

\theoremstyle{definition}

\newtheorem{remark}[theorem]{Remark}
\newtheorem{observation}[theorem]{Observation}

\newcommand{\R}{\mathbb{R}}
\newcommand{\Q}{\mathbb{Q}}
\newcommand{\bd}{\partial}
\newcommand{\com}{\mathbf{c}}
\newcommand{\shu}{(S,H,U)}
\newcommand{\repourl}{https://github.com/Tancredi-Schettini-Gherardini/certified_gomboc}

\title[A certified mono-monostatic polyhedron]{An explicit mono-monostatic polyhedron}

\author{Tancredi Schettini Gherardini}
\address{Mathematisches Institut, Universit\"at Bonn}
\email{tsg@math.uni-bonn.de}

\thanks{The construction, software, and certification described in this paper were produced in
a human--AI collaboration with Claude Opus 4.8 and Fable 5; see Section~\ref{sec:workflow} for a precise account of the
division of labour.}

\thanks{\emph{Code and data.} The half-space data of both certified polytopes, the
certificates and the verifier are available at the repo \href{\repourl}{\nolinkurl{\repourl}}; see Section~\ref{sec:data}.}

\begin{document}

\begin{abstract}
A convex body is \emph{mono-monostatic} if, resting under gravity on a horizontal plane,
it has exactly one stable and one unstable equilibrium position. Smooth
mono-monostatic homogeneous bodies exist (the G\"omb\"oc of Domokos and V\'arkonyi), and L\'angi proved
that (homogeneous) mono-monostatic \emph{polyhedra} exist; although no explicit example
appears to have been published, to the author's knowledge. We construct explicitly two such mono-monostatic polytopes, 
the smaller one having $56{,}946$ faces; importantly, we \emph{certify} them: the
polytope is presented as an intersection of half-spaces with rational data, and a
certifying verification establishes, using exact rational arithmetic for every decisive
comparison, that the body has equilibrium signature $(S,H,U)=(1,0,1)$ with respect to its
own exact centroid, with explicit nondegeneracy margins. We describe the geometric
obstructions that make naive discretisations of smooth mono-monostatic bodies fail, the
adaptive construction that overcomes them, and the certification strategy. While the result
itself is not strikingly novel, we emphasise the non-standard (but increasingly more common) methodology: the entire
programme, i.e.~experiments, constructions and the verifier itself, was implemented
by AI agents under human mathematical direction. We argue that
\emph{exact certification of numerically discovered objects} is the natural contract
between such AI-assisted workflows and mathematical standards of rigour.
\end{abstract}

\maketitle

\section{Introduction}\label{sec:intro}

A convex body $K\subset\R^3$ with centre of mass $\com$ rests in equilibrium on a
horizontal plane exactly at the points $x\in\bd K$ where the outward support direction is
parallel to $x-\com$. Generically these equilibria are nondegenerate critical points of
the distance function $\delta(x)=|x-\com|$ restricted to $\bd K$, and they come in three
kinds: local minima (stable rest positions), saddles, and local maxima. Writing $S$, $H$,
$U$ for their numbers, the Poincar\'e--Hopf theorem forces
\begin{equation}\label{eq:PH}
  S-H+U=2 .
\end{equation}
The minimal signature $(S,H,U)=(1,0,1)$ (one stable and one unstable equilibrium, no
saddle) is famously realised by the \emph{G\"omb\"oc} of Domokos and V\'arkonyi
\cite{DV2006,VD2006}, answering a question of V.\,I.~Arnold. Such bodies are called
\emph{mono-monostatic}.

For polyhedra the situation is more delicate. A tetrahedron has always at least two
stable faces \cite{Dawson1985}; Conway and Guy constructed a \emph{monostable} (one
stable face) polyhedron with $19$ faces, and the current record, due to Reshetov, has
$14$ faces \cite{Reshetov2014}. Monostability constrains only $S=1$; Reshetov's body,
as we certify below in exact arithmetic, has signature $(1,1,2)$. Whether \emph{mono-monostatic} polyhedra
exist at all was settled affirmatively by L\'angi \cite{Langi2022}, who proved that a
centred convex body can be approximated uniformly by mono-monostatic polyhedra if and
only if it is itself of type $(1,1)$ in the appropriate sense. L\'angi's argument is
constructive in principle but was, to our knowledge, never instantiated: no explicit
mono-monostatic polyhedron appears in the literature. The question of the minimal number
of faces, known informally as the \emph{G\"omb\"ocedron} problem and for which G.~Domokos has
offered a prize scaling inversely with the face count, is wide open, with the known
constraints leaving an enormous range.\footnote{The prize is concerned with the minimum number of faces, edges and vertices, not a
construction, and therefore our contribution at this point is simply an
upper bound. The construction presented here establishes that whoever
eventually determines the minimum will receive at least \$2.93. Meanwhile we offer,
for each further improvement of the record, $3\times\log_{10}
(\frac{56{,}946}{\#\mathrm{faces}})$ cents of our own; this might increase if we get tenured.}

We should explicitly emphasise that everything discussed above and in the remainder of this paper concerns \emph{homogeneous} bodies.
For inhomogeneous polyhedra, the situation is very different and largely understood: a monostable inhomogeneous tetrahedron, conjectured by Conway,
was recently contructed \cite{ADD2025}; inhomogeneous mono-monostatic tetrahedra do not exist, but every other allowed face vector is realised, so five
faces and five vertices are sufficient \cite{ADDR2023}; moreover, concering unit point masses located at the vertices, a mono-monostatic $0$-skeleton with 
$21$ faces and $21$ vertices is known \cite{DK2023}, and the minimal
mono-unstable polyhedron has been identified, with $8$ faces and $11$ vertices
\cite{BDKR2022,PRDB2023}. We note that many of these results rely on exact certificates, in the same spirit as the certification presented here.
Returning to homogeneous bodies, which is the subject of our investigation, no exact minimum is known.

This paper reports two things.

\smallskip
\emph{(i) An explicit construction.} We build mono-monostatic polytopes as adaptive
tangent covers of an explicit smooth mono-monostatic body, correcting two geometric
obstructions (Section~\ref{sec:construction}) that defeat naive discretisation: a
near-critical \emph{ridge} that forces locally fine facets, and a \emph{sag dipole} that
displaces the centroid of any circumscribed polytope beyond the tolerance that the ridge
permits. The smaller of our two bodies has
\[
  F = 56{,}946 \text{ faces}, \qquad E=170{,}832, \qquad V=113{,}888 .
\]

\smallskip
\emph{(ii) A certificate.} The construction is numerical, and we regard no numerical
claim about $50$ near-degenerate margins per thousand faces as trustworthy by itself.
We therefore treat the constructed half-space data list of $56{,}946$ pairs
$(n_i,h_i)$ of double-precision numbers, i.e.\ \emph{exact rationals}, as the
definition of a rational polytope, and verify the signature $(1,0,1)$ by a certifying
algorithm (Section~\ref{sec:certification}) in which every decisive comparison is either
carried out in exact rational arithmetic or protected by a rigorous floating-point error
bound with an exact-arithmetic fallback. The outcome is:

\begin{theorem}\label{thm:main}
Let $P=\bigcap_{i=1}^{56946}\{x\in\R^3 : \langle n_i,x\rangle\le h_i\}$, where
$(n_i,h_i)$ are the rational values published in the data set accompanying this paper
(SHA-256 \texttt{3b9618\ldots}). Then $P$ is a convex polytope with $113{,}888$ vertices,
all simple, $170{,}832$ edges and $56{,}946$ facets, of volume
$4.2185974\ldots$, and with respect to its exact uniform-density centroid it has exactly
one stable equilibrium (a facet, with foot-of-perpendicular margin
$\ge 4.6\cdot10^{-3}$), exactly one unstable equilibrium (a vertex, with normal-cone
margins $\ge 1.69\cdot10^{-8}$), and no saddle equilibrium; every equilibrium is
nondegenerate. In particular $(S,H,U)=(1,0,1)$.
\end{theorem}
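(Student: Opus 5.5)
The proof is a finite computation on the published rational data. Its correctness rests on three things: a reduction of equilibria of a polytope to combinatorial conditions, exact arithmetic for every decisive comparison, and a check that the computed certificate satisfies these conditions. The reduction is standard. For a convex polytope $P$ with interior point $\com$, equilibria of $\delta(x)=|x-\com|$ on $\bd P$ occur only in three forms:
\begin{itemize}
\item at the foot of the perpendicular from $\com$ to a facet plane, when that foot lies in the relative interior of the facet (a stable point, nondegenerate iff the foot is strictly interior);
\item at the foot of the perpendicular to an edge line, when that foot lies strictly inside the edge and $\com-x$ lies strictly inside the normal cone of the edge (a saddle);
\item at a vertex $v$, when $v-\com$ lies in the interior of the normal cone of $v$ (an unstable point).
\end{itemize}
Nondegeneracy means that every such inclusion is strict, with positive margins. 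Since Poincar\'e--Hopf \eqref{eq:PH} is only a consistency check, I will count all three types directly.

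First I establish the combinatorics exactly. For each $i$, decide in rational arithmetic whether facet $i$ is nonredundant. Enumerate the vertices as triples of planes whose intersection point $v$ satisfies $\langle n_j,v\rangle\le h_j$ for all $j$. Simplicity means equality holds for exactly three indices. A floating-point convex hull can propose the triples. Each proposed vertex is then certified by solving its $3\times3$ system over $\Q$ and checking the inequalities, using filtered floating-point with a rigorous error bound and falling back to exact arithmetic when the filter is inconclusive. To know the list is complete, check that the proposed face lattice is a closed $2$-manifold: every edge (pair of planes shared by two certified vertices) has exactly two endpoints, every facet boundary is a single cycle, and $V-E+F=2$. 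A closed polyhedral surface of certified vertices lying on $\bd P$ must be all of $\bd P$. This yields $V=113{,}888$, $E=170{,}832$, $F=56{,}946$.

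Next I compute the volume and centroid exactly. Triangulate each facet from a vertex on it, cone to an interior rational point, and sum the signed tetrahedral volumes and first moments over $\Q$. This gives the exact rational $\com$ and the volume $4.2185974\ldots$. This is the step I expect to be the main obstacle in practice: the vertices are rationals with large denominators, so the sums over roughly $3.4\times10^5$ tetrahedra blow up in size. I would use a common-denominator strategy facet by facet, or exact interval arithmetic with guaranteed enclosures of $\com$ whose radius is far below the smallest margin ($10^{-8}$). If only an enclosure is available, every subsequent test must be made robust uniformly over the enclosure box.

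With $\com$ in hand I test every candidate. For each facet, compute the foot of the perpendicular and decide strict interiority against the facet's edge planes. The claim is that exactly one facet passes, with margin $\ge4.6\cdot10^{-3}$, and that all others fail strictly. For each edge, decide whether the foot lies strictly inside the segment and whether $\com-x$ lies in the normal cone; all edges must fail strictly. For each vertex $v$, write $v-\com=\sum_k\lambda_k n_{i_k}$ over its three facet normals. Since the vertex is simple, $v-\com$ lies in the interior of the normal cone iff all $\lambda_k>0$. Exactly one vertex must pass, with $\min\lambda\ge1.69\cdot10^{-8}$ (suitably normalized), and all other vertices must have some $\lambda_k<0$. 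Each sign decision is made by a floating-point filter with exact fallback. Strictness of every decision gives nondegeneracy. The final count is $(1,0,1)$, consistent with \eqref{eq:PH}.
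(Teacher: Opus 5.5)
Your plan has the same architecture as the paper's certification: an untrusted hull oracle proposing plane triples, exact or filtered-with-exact-fallback certification of each triple, a manifold/Euler check, a fan-triangulated centroid, and strict per-cell tests. However, one test is wrong as written, and the key completeness step is only asserted.

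\textbf{The saddle test.} You ask whether $\com-x$ lies in the normal cone of the edge. For the outward cone $N_P(x)$ this never holds when $\com$ is interior: taking $y=\com$ gives $\langle \com-x,\com-x\rangle>0$. This is the cone spanned by $n_A,n_B$, and it is the one you use at vertices. So your saddle phase would return $H=0$ for \emph{every} polytope and would certify nothing. The condition must be $x-\com\in N_P(x)$, as in Section~\ref{sec:prelim}; this is presumably a slip, since your vertex test uses $v-\com$.

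\textbf{The centroid enclosure.} If you fall back to an enclosure, its radius must be negligible against more than the two reported margins ($\sim10^{-8}$). It must be negligible against the smallest decisive quantity among the tests that must \emph{fail}, namely the vertex and edge candidates surviving the filters. In the paper's run that quantity is $7.2\cdot10^{-20}$. The paper therefore uses:
\begin{itemize}
\item radius $\approx2\cdot10^{-93}$;
\item exact tests at the rational centre $\tilde\com$;
\item a guard $|q|>10^{-60}$;
\item explicit gradient bounds on each $q$ as a function of the centre.
\end{itemize}
Also, the theorem's vertex margin is the slack $-\langle v-\tilde\com,w-v\rangle$ over the three neighbours $w$. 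Your $\lambda_k$ have the same signs, differing by the positive factors $-\langle n_{i_k},w_k-v\rangle$, but not the same values.

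\textbf{Completeness.} The sentence ``a closed polyhedral surface of certified vertices lying on $\bd P$ must be all of $\bd P$'' carries the whole completeness argument and is not justified. In particular, boundedness of $P$, which the theorem asserts, is never addressed. Write $H_i=\{\langle n_i,x\rangle=h_i\}$. Your topological route can be completed as follows.
\begin{itemize}
\item[(a)] A derived edge $\{i,j\}$ with endpoints $v_t\neq v_{t'}$ is an actual edge of $P$. Indeed, $P\cap H_i\cap H_j$ is a convex subset of a line in which the extreme points $v_t,v_{t'}$ must be the endpoints.
\item[(b)] The union of the realised facet polygons is then open in $\bd P$. This uses that each proposed simple vertex has all three of its edges derived.
\item[(c)] $\bd P$ is connected even before boundedness is known, since $P$ has a vertex and hence contains no line.
\end{itemize}
Compactness then gives equality with $\bd P$, boundedness of $P$, and that every vertex of $P$ is proposed.

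The paper's Lemma~\ref{lem:complex} proves the same thing combinatorially. By (a) and simplicity, the proposed vertices are closed under adjacency in the graph of $P$ and meet no unbounded edge. The graph of vertices and bounded edges of a pointed polyhedron is connected, via monotone paths for a generic functional negative on the recession cone. So all vertices are proposed and boundedness follows, with no appeal to invariance of domain or to the topology of $\bd P$.
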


The analogous statement, with the margins of Table~\ref{tab:bodies}, holds for our
less-refined construction with $F=272{,}971$. The certificate is reproducible in
minutes on a laptop.

We stress the intended reading. The theorem is a finite statement about a finite
rational object, established by a finite computation; the mathematical content of the
verification reduces to elementary convex geometry (Section~\ref{sec:certification})
plus the correctness of an approximately $700$-line verifier, of exact rational
arithmetic, and of IEEE-754 floating point used only within proven error budgets. This
is the standard of \emph{certifying algorithms} \cite{MMNS2011}.

Finally, Section~\ref{sec:workflow} documents how the result was obtained: a
human-AI workflow in which the author posed the problem, set standards and redirected
strategy, while a general-purpose AI agent (Claude Opus 4.8 and Fable 5 were both used at different stages of the project) 
designed and ran the experiments, discovered
and repaired the obstructions. We believe this experience is broadly relevant: as
AI systems become capable of extended autonomous mathematical experimentation, exact
certification is the mechanism that converts their plausible numerical artefacts into
mathematics.

\subsubsection*{Acknowledgements}
The author thanks Geordie Williamson for introducing them to the G\"omb\"oc,
and the fascinating story behind it, and Gabor Doomokos for his great insights and
precious comments on the manuscript. The author acknowledges the support of the 2024
Max Planck-Humboldt Research Award, bestowed on Geordie Williamson by
the Max Planck Society and the Alexander von Humboldt Foundation and
hosted by Catharina Stroppel at the University of Bonn.

\begin{figure}
\includegraphics[width=1\textwidth]{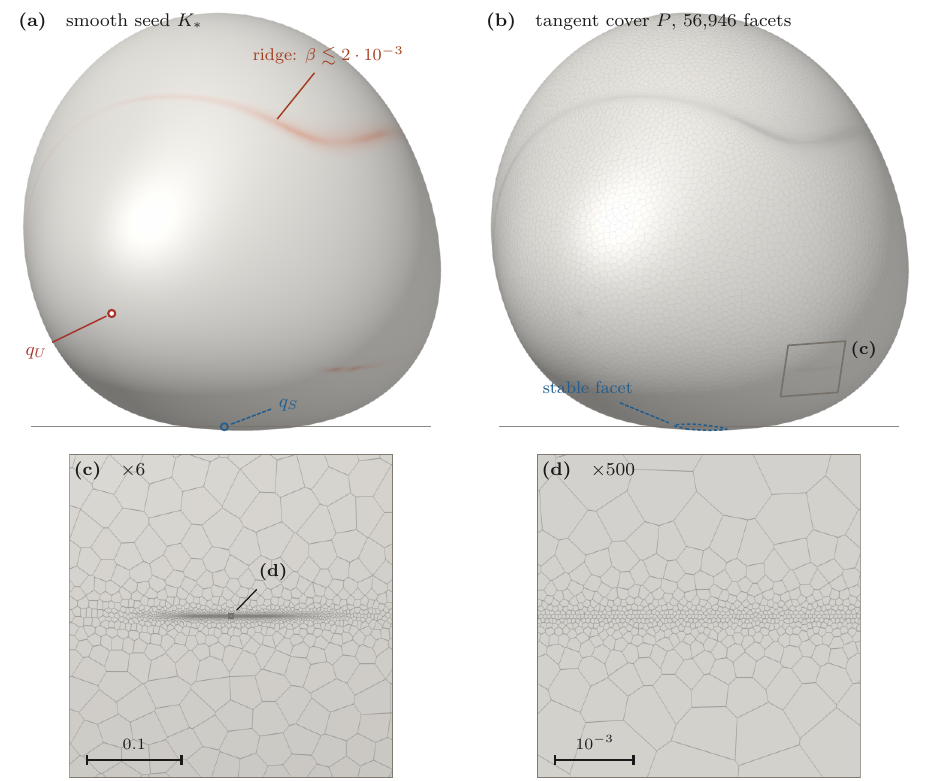}  
\caption{The $56{,}946$ facets \emph{G\"omb\"ocedron}. \textbf{(a)} The smooth G\"omb\"oc seed $K_\ast$, resting on its stable equilibrium $q_S$; 
$q_U$ is the unstable equilibrium, and
the near-critical ridge is highlighted in red, where the tilt $\beta$ drops below $2\cdot10^{-3}$.
\textbf{(b)} The polytope $P$ of Theorem~\ref{thm:main}, the equal-sag tangent
cover of $K_\ast$ with $56{,}946$ facets, in the same position and at the same scale,
with hairline facet edges; the dashed contour is its single stable facet. The two
bodies are indistinguishable here, and the seam across the upper surface is where the cells become extremely dense.
\textbf{(c), (d)} The marked regions at $\times6$ and $\times500$: the cells run
from $4\cdot10^{-2}$ across, where the equal-sag branch of \eqref{eq:celllaw}
binds, to $\approx10^{-4}$ along the ridge, where the tilt-adaptive branch does.}
\end{figure}

\section{Equilibria of convex polytopes}\label{sec:prelim}

Let $P\subset\R^3$ be a convex polytope with nonempty interior, $\com$ an interior
point (for us always the uniform-density centroid), and let
$N_P(x)=\{w : \langle w, y-x\rangle\le 0\ \forall y\in P\}$ denote the outward normal
cone at $x\in\bd P$. A point $x^\ast\in\bd P$ is an \emph{equilibrium} of $P$ with
respect to $\com$ if
\[
  x^\ast-\com \in N_P(x^\ast),
\]
i.e.\ if the plane through $x^\ast$ orthogonal to $x^\ast-\com$ supports $P$. Equilibria
are the critical points of $\delta(x)=|x-\com|$ on $\bd P$. On a polytope they are
located as follows; all three characterisations are classical and standard:

\begin{itemize}
\item \emph{Facets (stable).} For a facet $\mathcal F$ with outward unit normal $n$ and
support value $h=\max_{y\in P}\langle n,y\rangle$, the orthogonal projection (the
``foot'') of $\com$ onto the facet plane is
$q=\com+(h-\langle n,\com\rangle)\,n$. The facet carries an equilibrium iff
$q\in\mathcal F$; it is a nondegenerate local minimum of $\delta$ iff $q$ lies in the
relative interior of $\mathcal F$. Its \emph{margin} is the distance from $q$ to the
boundary of the polygon $\mathcal F$.
\item \emph{Edges (saddles).} For an edge $e=[v,w]$ shared by facets with normals
$n_A,n_B$, let $q$ be the foot of $\com$ on the line through $v,w$. Then the relative
interior of $e$ carries an equilibrium iff $q$ lies in the open segment and $q-\com$
lies in the closed planar cone spanned by $n_A,n_B$ (all three vectors are orthogonal
to $w-v$, exactly so in rational arithmetic); the equilibrium is nondegenerate, and
then automatically a saddle, iff $q-\com$ lies in the interior of that cone.
\item \emph{Vertices (unstable).} A vertex $v$ carries an equilibrium iff
$v-\com\in N_P(v)=\mathrm{cone}(n_{i_1},\dots,n_{i_k})$, the cone of its incident facet
normals; if $v-\com$ lies in the interior of this cone, $v$ is a nondegenerate local
maximum. For a \emph{simple} vertex ($k=3$) this is three sign conditions, and is
equivalent to the statement that $v$ maximises the linear functional
$\langle v-\com,\,\cdot\,\rangle$ over $P$; consequently it can also be tested
\emph{locally}: $v$ is a nondegenerate local maximum iff
$\langle v-\com,\, w-v\rangle<0$ for the three neighbouring vertices $w$ (with
$\le$ in place of $<$, the test characterises equilibria including degenerate ones).
\end{itemize}

The verifier of Section~\ref{sec:certification} decides the strict forms of these
tests and aborts on exact ties, so degenerate configurations, should they occur,
are detected rather than misclassified.

When all equilibria are nondegenerate, \eqref{eq:PH} holds with $S,H,U$ counting
facet-, edge- and vertex-equilibria. A polytope with $\shu=(1,0,1)$ is mono-monostatic:
$\delta$ has a single minimum and a single maximum on a sphere, and its gradient flow
carries every non-maximal point to the unique stable facet. \footnote{The discrete analogue, quasi-static \emph{rolling}, strictly lowers the centroid
at each step, so on $P$ every rolling sequence terminates on the unique stable
facet; we have not computed the resulting rolling graph, and we leave this for future work (for rolling dynamics
of polytopes see for instance \cite{rolling}).}

\section{The construction}\label{sec:construction}

\subsection{The smooth seed}
Our starting point is an explicit smooth mono-monostatic body $K$, found seeding with Sloan's analytic G\"omb\"oc-type parameterisation
$r^4=1+A\sin\varphi\,\cos(\theta-k\varphi)$ \cite{Sloan2023}, i.e.~the case
$A=0.12$, $k=5$ projected onto the harmonic basis below, and then searching
within a finite-dimensional family of star-shaped bodies given by
$r(u) = 1+\sum_{\ell=2}^{5}\sum_{m}c_{\ell m}Y_{\ell m}(u)$; this search over real spherical
harmonics involves $32$ coefficients and the absence of $\ell\in\{0,1\}$ keeps the centroid near
the origin. The coefficients were optimised so that the rigorous equilibrium count of
the smooth surface is $(1,0,1)$. The particular body used
throughout, which we denote $K_\ast$, has volume $\approx 4.22$ and is fixed by the
published coefficient vector.

\begin{remark}[$K_\ast$ is not convex]\label{rem:dimple}
The search that we performed enforced $r>0$ but not convexity, leading to a $K_\ast$
which fails to be convex: its minimal principal curvature is $\approx-1.9\cdot10^{-2}$
(median $+0.85$, maximum $+1.71$) on one shallow dimple about $5^\circ$ across, centred
$3.6^\circ$ from $q_S$. This is not to be worried about for the following reasons. First, the
dimple sits where the tilt is large, $\beta\approx5.4\cdot10^{-2}\approx900\,\beta_{\min}$, 
i.e.~nowhere near the near-critical ridge on which the construction and the equilibrium
structure depend. Second, it lies beneath the single designated stable facet, and the
cover's normal-space exclusion (Section~\ref{sec:construction}) removes every candidate
plane over the whole dimple, so no facet of $P$ is placed there. Third, 
$K_\ast$ leaves $P$ only across that facet's plane, on a patch of solid angle
$<3\cdot10^{-3}$~sr, by at most $3.3\cdot10^{-5}$ -- a set of volume $<4\cdot10^{-8}$,
and a protrusion $140$ times smaller than the certified foot margin of the same facet.
So $P$ is bounded by planes tangent to $K_\ast$ but does not quite contain it. Finally, and most importantly, no
certified claim refers to $K_\ast$ (Section~\ref{sec:certification}).
\end{remark}

For a smooth body $K$, star-shaped with respect to its centroid $\com$, with boundary the
radial graph $x(u)=r(u)\,u$, $u\in S^2$, define the \emph{tilt} $\beta(u)\in[0,\pi/2)$ at
$x(u)$ as the angle between the outer unit normal at $x(u)$ and the radial direction
$x(u)-\com$. Equilibria of $K$ are
exactly the zeros of $\beta$. The quantity governing everything below is
\[
  \beta_{\min} := \min\{\beta(u) : u \text{ outside small caps around the two equilibria}\},
\]
the minimal tilt on the ``ridge'' separating the basins of the two equilibria. For
$K_\ast$, $\beta_{\min}\approx 6.2\cdot10^{-5}$ rad. This is not an accident of our
seed: experimentally, every smooth $(1,0,1)$ body we produced in several parametric
families had $\beta_{\min}\lesssim10^{-3}$, and within our family, attempts to enlarge
$\beta_{\min}$ under the $(1,0,1)$ constraint failed systematically; mono-monostatic
bodies live close to the boundary of their own existence region, and the nearly-critical
ridge appears to be the geometric price (for the smooth G\"omb\"oc phenomenon of
``flatness'' see the discussion in \cite{VD2006}).

About its centroid, $P$ has $R_{\max}/R_{\min}=1.3298$ (the extrema
attained, as they must be for a $(1,0,1)$ body, at the two equilibria), and
its symmetry group is trivial -- consistent with the characterisation of the
symmetry groups of mono-monostatic bodies in \cite{DLV2021}, and inherited by
the polytopes below.

\subsection{Why naive faceting fails, quantitatively}
Let $P$ be a polytope bounded by planes tangent to $K$ (cf. Remark~\ref{rem:dimple}), with
tangency points spaced $h$ apart near a boundary point where the tilt is $\beta$ and the
relevant normal curvature is $\kappa$. Two elementary local computations explain all the
failures we observed. The fact that fine discretisation generically introduces additional equilibriais a known phenomenon, studied
quantitatively in \cite{DLSz2012} (see also \cite{DLSz2014}); the two
observations below are local versions of it, in the context of specific covers and sharp enough to be dealt with.

\begin{observation}[spurious vertex maxima]\label{obs:hcrit}
A vertex formed by tangent planes whose tangency points have spacing $h$ is a local
maximum of $\delta$ iff the radial direction lies in its normal cone; for a
near-critical point this happens iff
\[
  h \;\gtrsim\; h_{\mathrm{crit}} \;=\; \frac{2\beta}{\kappa}.
\]
Below $h_{\mathrm{crit}}$ the spurious maximum disappears.
\end{observation}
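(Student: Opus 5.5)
We reduce the vertex criterion to a second-order local computation near the tangency points. Place a near-critical boundary point $x_0$ of $K$ at the origin of a local frame, with the outer unit normal $\nu$ along the $z$-axis. Write the boundary locally as the graph $z=-\tfrac12\kappa s^2+O(s^3)$ along the relevant normal section, parametrised by arc-length-like coordinate $s$. The radial direction $x_0-\com$ makes angle $\beta$ with $\nu$, so its unit vector is $\nu\cos\beta+\tau\sin\beta$ for some unit tangent $\tau$. By the characterisation of vertex equilibria in Section~\ref{sec:prelim}, a vertex $v$ is a local maximum iff $v-\com$ lies in the interior of the cone spanned by its incident facet normals. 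Since $v-\com$ differs from $x_0-\com$ only by $O(h)$ in position, its direction differs by $O(h/|x_0-\com|)$. This is negligible against $\beta$ and $\kappa h$ at leading order, so we may use the radial direction at $x_0$.

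Next we compute the normal cone. The tangent planes touch $K$ at points $x(s_j)$ spaced $h$ apart. Their normals are $\nu(s_j)\approx\nu-\kappa s_j\tau_j$, the Gauss map to first order. The normal cone at the vertex is therefore, to first order, a small spherical polygon around $\nu$ whose angular extent in the direction $\tau$ is about $\kappa h/2$. This holds when the vertex sits roughly midway between tangency points, for example at the centre of a cell of diameter $h$, so that the incident normals are spread by $\pm\kappa h/2$ in the $\tau$ direction. The radial direction is displaced from $\nu$ by angle $\beta$ in the direction $\tau$. It therefore lies inside the cone iff $\beta\lesssim\kappa h/2$, which is the criterion $h\gtrsim h_{\mathrm{crit}}=2\beta/\kappa$. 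Below this threshold the radial direction falls outside every such small cone near $x_0$, so there is no vertex maximum. The matching facet statement also follows: the foot of $\com$ leaves the facet when $h$ is small compared with $\beta/\kappa$.

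To make the heuristic $\gtrsim$ precise, I would carry the Taylor expansion to second order and show that the corrections are $O(h^2\kappa^2+h\beta)$ relative to the leading terms. The reference-point shift mentioned above contributes an angle $O(h/R)$. This term is what needs care: near the ridge it must be compared with $\beta\sim 6\cdot10^{-5}$, and it is absorbed only because it is of the same order as the curvature effect, with a coefficient fixed by the ratio of $\kappa$ to $1/R$. I expect this to be the main obstacle. Precisely, the relevant quantity is the relative curvature $\kappa-1/|x_0-\com|$, the curvature of $\partial K$ compared with that of the sphere centred at $\com$, rather than $\kappa$ itself. I would handle this by interpreting $\kappa$ in the observation as that relative curvature, which is positive at a non-equilibrium ridge point of saddle-free type. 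Given that identification, the threshold statement is an elementary first-order computation, and the observation is stated only up to constants.
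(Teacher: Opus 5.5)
Your core computation is exactly the elementary local computation the paper has in mind: the incident normals spread by roughly $\pm\kappa h/2$ about the normal beneath the vertex (first-order Gauss map), the radial direction is tilted by $\beta$, so the vertex is a maximum iff $\beta\lesssim\kappa h/2$. The paper offers nothing beyond this heuristic, plus a numerical confirmation of the sign flip at $h_{\mathrm{crit}}$. The genuine problem is your handling of the reference point, which ends by changing the statement. In your first paragraph you discard an $O(h/R)$ rotation of $v-\com$ as negligible against $\kappa h$. That is not a valid reason, since both are first order in $h$ and $\kappa R=O(1)$. In your last paragraph you then absorb this rotation by reinterpreting $\kappa$ as the relative curvature $\kappa-1/|x_0-\com|$. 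That reinterpretation is false. The threshold involves the genuine normal curvature, which is the paper's $\kappa$ (the same one governing the sag $\kappa\rho^2/8$). The relative curvature only measures how fast the tilt changes along $\bd K$.

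The missing idea is to anchor $x_0$ at the foot of the vertex, i.e.\ the point of $\bd K$ directly beneath it. In the quadratic model this is the circumcentre of the tangency points in the metric of the second fundamental form. Then $v-x_0$ is normal and of size $O(\kappa h^2)$, so $v-\com$ and $x_0-\com$ differ in direction only by $O(\kappa h^2/R)$. Your first-paragraph conclusion then holds for the right reason, and the criterion is $\beta(x_0)\lesssim\kappa h/2$ with $\beta(x_0)$ the local tilt.

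Suppose instead you anchor at a point a tangential distance $m$ from the foot. Take the planar model $z=-\tfrac{\kappa}{2}s^2$ with tangency points at $s=m\pm h/2$. The vertex is at $s=m$, the radial angle is $\beta+m/R$, and the normal angles are $\kappa(m\pm h/2)$. Hence the criterion is $|\beta-(\kappa-1/R)m|<\kappa h/2$. The factor $\kappa-1/R$ multiplies $m$, not $h$, and $\beta-(\kappa-1/R)m$ is just the tilt at the foot. Your formula would require displacing the vertex by about $h/2$ while keeping the incident normals centred at $\nu(x_0)$. That is inconsistent, because the tangency points, and hence the normals, move with the vertex.

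A concrete check: take the unit circle with reference centre $\com=(c,0)$ and $x_0=(0,1)$. Then $\beta\approx c$ and $\kappa-1/R\approx c^2/2$. The tangent lines at angles $\pi/2\pm h/2$ meet at $v=(0,\sec(h/2))$. The vector $v-\com$ lies between their normals iff $c<\tan(h/2)\sec(h/2)\approx h/2$, i.e.\ $h_{\mathrm{crit}}\approx 2\beta/\kappa$, exactly as stated. Your reinterpretation instead predicts $h_{\mathrm{crit}}\approx 4/c$.

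In three dimensions the same bookkeeping gives the following criterion. The tilt vector at the foot must lie in $W(\mathrm{conv}\{d_1,d_2,d_3\})$, where the $d_j$ are the tangential offsets of the tangency points from the foot and $W$ is the full Weingarten map (not $W-R^{-1}I$). The constant $\tfrac12$ becomes a shape-dependent constant, which the $\gtrsim$ absorbs.
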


At the ridge minimum of $K_\ast$, $h_{\mathrm{crit}}\approx 10^{-4}$ rad: the cover must
be \emph{locally} that fine, but only there -- an adaptive cover with cell size
proportional to $\beta(u)$ concentrates the faces along the ridge. We emphasise a
finding that shaped the project: this obstruction is \emph{geometric, not numerical}. We
recomputed the decisive cone tests in $50$-digit arithmetic and in double precision; the
two agree to $10^{-16}$ at every cell size, with the sign flip occurring at
$h_{\mathrm{crit}}$, far above any rounding scale. No amount of arithmetic precision
substitutes for the missing faces.

\begin{observation}[sag dipole]\label{obs:dipole}
A tangent facet with tangency-point spacing $\rho$ ``sags'' outside $K$ by
$t\approx\kappa\rho^2/8$. If the cell size is chosen adaptively, $\rho\propto\beta$,
then $t\propto\beta^2$ inherits the asymmetry of the tilt field, and the union of sag
wedges displaces the centroid of $P$ from that of $K$ by a nonzero \emph{dipole}
moment. For $K_\ast$ with the pure adaptive rule the displacement is
$|\com_P-\com_K|\approx3.5\cdot10^{-4}$, while the ridge tolerates only displacements
$\ll R\,\beta_{\min}\approx6.6\cdot10^{-5}$: the equilibrium count near the ridge is
hypersensitive to the centre, and the naively covered polytope is not mono-monostatic
even when Observation~\ref{obs:hcrit} is respected.
\end{observation}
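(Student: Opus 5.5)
The statement just above is Observation~\ref{obs:dipole}. I would treat it as a local second-order computation followed by a first-moment (dipole) estimate. The plan has four steps.

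\emph{Step 1: the sag of one facet.} Near a tangency point $p$, I write $\bd K$ as a graph over its tangent plane, $z=-\tfrac12\kappa|y|^2+O(|y|^3)$, taking $\kappa$ as the relevant normal curvature. The tangent facet is the plane $z=0$, restricted to its cell. The cell boundaries are cut by the neighbouring tangent planes, whose tangency points lie at distance about $\rho$. Two such planes meet where the quadratic deviation equals its value at the midpoint between the two tangency points. So the gap between facet and surface at a cell boundary is $\tfrac12\kappa(\rho/2)^2=\kappa\rho^2/8$, which gives $t\approx\kappa\rho^2/8$. The volume of the sag wedge over a cell of area $\sim\rho^2$ is then of order $\kappa\rho^4$.

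\emph{Step 2: sum the wedges.} Write $P\setminus K$ as the union of these wedges. Summing over cells with the adaptive density of cells, the excess mass density per unit boundary area is $\sigma(u)\approx c\,\kappa(u)\rho(u)^2$, with $c$ a shape constant of the cell tiling. Under the rule $\rho=\lambda\beta$ this becomes $\sigma\propto\kappa\beta^2$. The first-moment formula
\[
  \com_P-\com_K=\frac{\int_{\bd K}(x-\com_K)\,\sigma\,dA-\mathrm{vol}(P\setminus K)(\com_P-\com_K)}{\mathrm{vol}(P)}
\]
then gives, to leading order, $\com_P-\com_K\approx \mathrm{vol}(K)^{-1}\int_{\bd K}(x-\com_K)\,\sigma\,dA$. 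This quantity vanishes only if the weight $\kappa\beta^2$ has zero first moment. Since $\beta$ is very asymmetric, being tiny on the ridge and at the equilibria and large elsewhere, the moment is generically nonzero.

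\emph{Step 3: the numbers for $K_\ast$.} Evaluate that integral numerically on $K_\ast$, with $\lambda$ fixed by the cutoff of Observation~\ref{obs:hcrit}. This should reproduce the displacement $3.5\cdot10^{-4}$.

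\emph{Step 4: sensitivity of the ridge to the centre.} Shifting the centre by $d$ rotates the radial direction at distance $R$ by about $|d|/R$. Every ridge point with $\beta<|d|/R$ can therefore become an equilibrium, which changes the count. So one needs $|d|\ll R\beta_{\min}\approx6.6\cdot10^{-5}$. Comparing this with Step 3 gives the claimed failure.

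\emph{Main obstacle.} Step 2 is the hard part: controlling the constant $c$ and the contribution of the dense cells near the ridge, where cell shapes are anisotropic. I would first bound $\sigma$ cellwise from above and below, to show that the dipole is genuinely nonzero rather than an artefact of cancellation. Because this is an Observation rather than a theorem, the final confirmation would be the direct computation of $\com_P$ for the pure-adaptive cover.
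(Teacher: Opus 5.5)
Your plan is essentially the paper's own justification: the same local computation $t\approx\kappa\rho^2/8$, the same sag-dipole first-moment integral, and a direct measurement of $\com_P$ for the pure-adaptive cover. The paper, however, compares the predicted integral with the measured $3.5\cdot10^{-4}$ only in \emph{direction} (within $12^\circ$, cosine $0.979$) and takes the magnitude from the built cover, which sidesteps your worry about the tiling constant $c$. Two small cautions. First, the final clause (the naive cover is not mono-monostatic) is in the paper an observed outcome of classifying that cover: $|\com_P-\com_K|>R\beta_{\min}$ only destroys the ridge margin and does not by itself force a surplus equilibrium pair. Second, your moment formula double-counts the correction term; exactly, $\mathrm{vol}(P)(\com_P-\com_K)=\int_{P\setminus K}(x-\com_K)\,dx$, although the slip is harmless at leading order.
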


We verified Observation~\ref{obs:dipole} quantitatively: the measured displacement
agrees with the predicted sag-dipole integral in direction to within $12^\circ$
(cosine $0.979$).

\subsection{The adaptive equal-sag cover}
The final construction chooses, for each direction $u$, the local cell size
\begin{equation}\label{eq:celllaw}
  \rho(u) \;=\; \min\Bigl(\;\underbrace{\sqrt{8t_0/\kappa(u)}}_{\text{equal sag}},\;\;
  \underbrace{\max\bigl(\beta(u),\beta_0\bigr)/\sigma}_{\text{ridge-adaptive}}\Bigr),
\end{equation}
and places tangent planes at a maximal set of directions with pairwise angular
separations respecting~\eqref{eq:celllaw} (a graded Poisson-disc sampling with an
explicit \emph{coverage scrutiny}: deterministic probe rings around every tilt minimum
verify that no gap exceeds the local $\rho$, and any uncovered probe is added to the
cover). The first branch of \eqref{eq:celllaw} makes the sag \emph{constant} ($=t_0$)
away from the ridge, cancelling the dipole of Observation~\ref{obs:dipole} by symmetry;
the second branch enforces Observation~\ref{obs:hcrit} in the ridge band, with safety
factor $\sigma$ and a floor $\beta_0\propto\beta_{\min}$. Two special regions are
treated separately: the stable equilibrium of $K$ receives a single large tangent facet
(the designated stable face), and the unstable equilibrium is left as the natural apex
vertex of the surrounding cover; the exclusion is applied both in direction space and in normal space -- every cover plane
whose normal lies within $\mathrm{CAP\_RAD}_S$ of the cap normal is dropped -- so the cap
plane alone represents the whole neighbourhood of $q_S$ on the Gauss sphere.

The residual centroid displacement is then corrected by a \emph{compensation} step: the
off-ridge plane offsets are perturbed by a dipole field
$h_i\mapsto h_i+\zeta\,\langle n_i,d\rangle\,w(\beta_i)$ supported away from the
marginal band, with the scalar $\zeta$ solved by a secant iteration against the measured
centroid of the built polytope. This step is what allows aggressive parameters: with
$\sigma=1.5$, $t_0=10^{-4}$ the construction yields the $56{,}946$-face body of
Theorem~\ref{thm:main} with $|\com_P-\com_K|\approx1.5\cdot10^{-6}$, comfortably inside
the ridge tolerance; the conservative parameters $\sigma=3$, $t_0=10^{-5}$ and no
compensation give the $272{,}971$-face body. Within this construction family the face
count cannot be reduced much further: the ridge band alone requires
$\Theta(\sigma^2/\beta_{\min})$ faces, about $5\cdot10^4$ here, and pruning experiments
confirm a floor at $\approx5.5\cdot10^4$.

\begin{remark}
Every count above, produced during the search, was computed by a floating-point
classifier (a robust implementation of the criteria of Section~\ref{sec:prelim},
cross-validated on known polyhedra including Reshetov's). We treated its output as a
\emph{conjecture generator}: the construction was iterated until the classifier reported
$(1,0,1)$ robustly, and only then handed to the certifier. The same caveat applies to
every numerical diagnostic quoted in this section and in
Section~\ref{sec:frontier} ($\beta_{\min}$, the dipole magnitude and direction,
persistence values): these are floating-point measurements from the search phase,
recorded to motivate the construction and the open problems; they are not part of the
certified claims.
\end{remark}

\section{Exact certification}\label{sec:certification}

\subsection{The object and the claim}
A double-precision number is a dyadic rational, so the constructed data define
\emph{exactly}
\[
  P=\bigcap_{i=1}^{M}\{x : \langle n_i,x\rangle\le h_i\},\qquad n_i\in\Q^3,\ h_i\in\Q,
\]
with $M=56{,}946$. All claims of Theorem~\ref{thm:main} are about this rational
polytope; the smooth body $K_\ast$ plays no further role (it was scaffolding); 
in particular no property of $K_\ast$, convexity included, enters any certified claim. The
verification must establish: the combinatorial boundary structure of $P$; its exact
centroid; and the facet/edge/vertex equilibrium tests of Section~\ref{sec:prelim}, each
with a strict, quantified margin.

\subsection{Architecture: an untrusted oracle and exact decisions}
The verifier follows the \emph{certifying algorithm} paradigm \cite{MMNS2011}. A
floating-point convex-hull code (\textsc{Qhull} \cite{qhull}, via its polar-dual
formulation) is used only as an \emph{oracle} that proposes a combinatorial structure:
a list of triples $\{i,j,k\}$ of plane indices, each intended to define a vertex. Nothing
is trusted from the oracle; every subsequent decision is made either
\begin{itemize}
\item[(a)] in exact rational arithmetic (GMP), or
\item[(b)] in double precision under an explicit forward error bound (of Higham type
\cite{Higham}, with all input magnitudes checked at load time), used \emph{only in one
direction}: to bound the set of candidates that are then decided exactly. A
floating-point filter may never reject a potential equilibrium; it may only certify,
with a margin at least three orders of magnitude above the proven error bound (four,
for all filters except the saddle-numerator filter of phase~5), that a test is far
from tight.
\end{itemize}

\subsection{The boundary complex}
The oracle's triples are certified to be the true boundary complex of $P$ by elementary
convex geometry, avoiding any global convexity or immersion theorem:

\begin{lemma}\label{lem:complex}
Suppose a set $T$ of plane triples satisfies: (i) each triple's normals are linearly
independent (nonzero $3\times3$ determinant over $\Q$) and its intersection point
$v_t$ satisfies $\langle n_m,v_t\rangle\le h_m$ for \emph{all} $m$, strictly unless
$m\in t$; (ii) the plane-pair edges derived from $T$ form a closed connected
$2$-manifold with Euler characteristic $2$ in which every triple has exactly three
neighbours. Then every $v_t$ is a simple vertex of $P$, every derived edge is an edge of
$P$, and these are \emph{all} vertices and edges of $P$; each plane incident to a cycle
of at least three triples carries a facet, and $P$ has no other facets.
\end{lemma}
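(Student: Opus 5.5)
Condition (i) does most of the local work. If the three normals of $t$ are independent, $v_t\in P$, and exactly the three planes of $t$ are tight at $v_t$, then $v_t$ is a vertex of $P$. It is simple, and its normal cone is $\mathrm{cone}(n_i,n_j,n_k)$. Near $v_t$, the polytope $P$ coincides with the simplicial cone $C_t=\{x:\langle n_m,x\rangle\le h_m,\ m\in t\}$, because the other constraints are strict and hence hold on a neighbourhood. So the three edge rays of $C_t$ emanate from $v_t$ along the plane-pair lines $\{i,j\},\{i,k\},\{j,k\}$, and the three neighbours of $t$ in (ii) lie along these rays. For a derived edge $\{t,t'\}$ with $|t\cap t'|=2$, both endpoints lie in $P$ and on the common line $\ell$. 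The segment $[v_t,v_{t'}]$ is therefore in $P$ and in both common planes, so it lies in a face of dimension $\le1$, namely the edge of $P$ on $\ell$. Since $v_t$ and $v_{t'}$ are both vertices, the edge is exactly this segment. (I would first check that $v_t\neq v_{t'}$; this follows from strictness, since the differing plane is tight at one and strict at the other.)

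The global part is to show there are no other vertices. Let $\Sigma$ be the union of the derived edges and of the 2-cells bounded by the cycles. By (ii) this is a closed connected surface of Euler characteristic $2$, hence a sphere, and its 1-skeleton $G$ is a subgraph of the edge graph of $P$. Every $t$ has degree three in $G$, and all three local edges of $P$ at $v_t$ appear among these, so each vertex of $G$ has all of its $P$-edges in $G$. The graph of a polytope is connected (Balinski's theorem, or simply by the simplex method). A connected subgraph that is closed under taking all incident edges must therefore be the whole graph. Hence $G$ is the full edge graph of $P$, and its vertices and edges are all the vertices and edges of $P$. This route needs only that $G$ is connected, which is part of (ii); the Euler characteristic and manifold hypotheses are then used for the facet statement.

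For facets: every facet of $P$ is a polygon whose vertices are vertices of $P$, so every facet plane occurs in some triple. Conversely, if plane $i$ appears in $t$, it is tight at the vertex $v_t$ and its normal is an extreme ray of the simple vertex cone, so the face $P\cap\{\langle n_i,x\rangle=h_i\}$ contains the two edges of $v_t$ lying in plane $i$. These edges span a $2$-dimensional set, so the face is a facet. The triples containing $i$, linked by derived edges of the form $\{i,\cdot\}$, trace its boundary polygon, which is a cycle of length $\ge3$. A plane appearing in no triple is never tight at any vertex and so carries no facet; a redundant or duplicate plane is excluded by strictness in (i).

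I expect the main obstacle to be the completeness step, which must rule out the oracle returning a consistent-looking sphere that covers only part of $\partial P$. The argument rests on connectivity of the polytope graph together with the local fact, given by the strictness in (i), that each certified vertex has exactly three $P$-edges, all present in $G$. I would make that local fact fully explicit, including that the neighbour in $G$ along each ray is the nearest vertex of $P$ on it. I would also note that the Euler characteristic condition is then a consistency check that makes the cycles genuinely the facet boundaries.
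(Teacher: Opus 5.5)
Your local analysis matches the paper's proof: strictness gives simple, pairwise distinct vertices; each derived edge is exactly $[v_t,v_{t'}]$; and all three edges of $P$ at $v_t$ are derived. Your completeness mechanism (a nonempty vertex set closed under adjacency in a connected graph is everything) is also the paper's.

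\textbf{The gap is in the global step.} You invoke Balinski's theorem (``the graph of a polytope is connected''), but nothing in the hypotheses says $P$ is bounded. $P$ is only a finite intersection of half-spaces with arbitrary rational data. Part of what the lemma must deliver is that $P$ has no unbounded edges: the claim ``these are all the edges of $P$'' fails if some vertex emits a ray. Theorem~\ref{thm:main} relies on this when it asserts that $P$ \emph{is} a polytope. This is not pedantry here, since the paper's reward-hacking episode produced an unbounded region. As written, your argument assumes what it must prove on this point. Your facet paragraph (``every facet is a polygon whose vertices are vertices of $P$'') inherits the same assumption.

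\textbf{The paper's repair.} $P$ is pointed, since $v_t$ is a vertex, so its recession cone is pointed. Hence there is a generic linear functional $c$ that is strictly negative on every nonzero recession direction. This $c$ is bounded above on $P$. From any non-optimal vertex some edge strictly increases $c$, and that edge cannot be a ray, because $c$ decreases along recession directions. So the graph of vertices and \emph{bounded} edges of a pointed polyhedron is connected. Your closure-under-adjacency argument then shows that the proposed vertices are all the vertices. Boundedness follows because an unbounded pointed polyhedron has a ray emanating from some vertex, whereas every edge at every vertex here is a bounded derived edge.

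\textbf{A direct alternative.} Suppose $d\neq0$ were a recession direction. Then $d\notin N_P(v)$ for every vertex $v$, since $v+d\in P$ would force $\langle d,d\rangle\le0$. So at every proposed vertex some edge direction of the simplicial cone $C_t$, and hence some derived edge, strictly increases $\langle d,\cdot\rangle$. That edge leads to another proposed vertex, giving an infinite strictly increasing walk on a finite set, which is impossible.

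Only after one of these arguments is Balinski's theorem available for $P$.
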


\begin{proof}[Proof sketch]
A feasible point with three linearly independent active constraints is a vertex of $P$,
and the strict inequalities in (i) make each $v_t$ simple and the $v_t$ pairwise
distinct (if two triples met at one point, a plane of one triple outside the other
would be active there, violating strictness). Next, each derived edge is a
\emph{bounded} edge of $P$: for a plane pair $\{i,j\}$ shared by triples $t\ne t'$, the
face $P\cap\{\langle n_i,x\rangle=h_i,\ \langle n_j,x\rangle=h_j\}$ is a convex subset
of a line containing the distinct vertices $v_t,v_{t'}$; vertices are extreme points,
hence endpoints of that face, which is therefore exactly the segment $[v_t,v_{t'}]$.
A simple vertex has exactly three edges, one per pair of active planes, so \emph{every}
edge of $P$ at a proposed vertex is a derived edge -- bounded, with both endpoints
proposed. Thus the proposed set is closed under adjacency in the graph of $P$, and no
proposed vertex meets an unbounded edge. Now $P$ is pointed (it has a vertex), and the
graph of vertices and \emph{bounded} edges of a pointed polyhedron is connected: a
generic linear functional negative on the recession cone is bounded above on $P$ and
increases, from any non-optimal vertex, along some edge, which cannot be an extreme ray
(the functional strictly decreases along recession directions); monotone edge paths
therefore join every vertex to the optimal one. Hence the proposed vertices exhaust
the vertices of $P$; and $P$ is bounded, since an unbounded pointed polyhedron carries
an extreme ray emanating from some vertex, while all vertices here meet only bounded
edges. (For the resulting polytope, connectivity is Balinski's theorem
\cite{Balinski}.) Facet identification follows because three distinct vertices of $P$
on a plane cannot be collinear, and the facet count is then pinned by Euler's relation.
\end{proof}

Condition (i) involves $M\times|T|\approx6.5\cdot10^9$ inequalities; the floating-point
filter certifies all but a few thousand of them, and the borderline pairs (in our run, $233$;
for the larger body, $2{,}622$) are decided exactly. Condition (ii) is a finite combinatorial check. In
addition the verifier certifies, in exact arithmetic, the strict local convexity of the
dual complex at each of the $170{,}832$ edges - not needed for
Lemma~\ref{lem:complex}, but a wholesale consistency check of the oracle output.

\subsection{The centroid}
The exact centroid is a ratio of polynomial expressions in the vertex coordinates
(themselves ratios of $3\times3$ determinants); computing it as a single rational
number invites coefficient explosion. Instead the verifier fan-triangulates each facet
(orientations verified exactly), accumulates the exact tetrahedral volume and moment
contributions in a fixed-point representation with denominator $2^{320}$, and obtains an
\emph{enclosure}: a rational point $\tilde\com$ with a proven bound
$|\com-\tilde\com|\le r$, $r\approx 2\cdot10^{-93}$. All equilibrium tests are then
performed exactly at $\tilde\com$, and every decisive quantity $q$ is additionally
required to satisfy $|q|>10^{-60}$. Each such $q$ is polynomial in the centre, with
gradient bounded on our data, from the per-coordinate magnitude bounds
checked at load time, by $20$ (foot slacks by $2.02$, vertex-cone margins by an
edge length $\le2\sqrt3\cdot1.25\le4.34$, edge-wedge determinants by $20$); since
$20r<10^{-90}\ll10^{-60}$, the sign of $q$ at $\tilde\com$ equals its sign at the
true centroid, and degenerate configurations would be detected, not misclassified. The verifier records the
smallest guarded magnitude of each run in the certificate, making the distance to
degeneracy a published, reproducible quantity: no decisive quantity came within
thirty orders of magnitude of the guard (minimum $|q|$ was $7.2\cdot10^{-20}$ for
$P_{56946}$ and $4.0\cdot10^{-21}$ for $P_{272971}$).

\subsection{The three counts}
With the complex and centroid certified, the tests of Section~\ref{sec:prelim} are run
for every facet, edge and vertex.
\emph{Stable:} for each of the $M$ planes, the foot of $\tilde\com$ is tested for
membership in $P$; the filter leaves a single candidate, the designated stable
facet, whose foot is then verified \emph{exactly against all other $M-1$ planes},
with minimal slack $4.65\cdot10^{-3}$; every other plane is excluded with certified
margin.
\emph{Unstable:} each vertex is tested by the three local inequalities
$\langle v-\tilde\com,\,w-v\rangle<0$; the filter leaves $106$ candidates, of which
exactly one survives exact adjudication (margins
$1.69\cdot10^{-8}$, $8.5\cdot10^{-7}$, $7.2\cdot10^{-6}$).
\emph{Saddle:} for each edge, the foot parameter condition is filtered on its
\emph{numerator} (note that the associated quotient is ill-conditioned for short edges); 
all $25{,}326$ surviving candidate edges are decided in exact
arithmetic, and none is a saddle. Finally $S-H+U=1-0+1=2$ is checked against
\eqref{eq:PH} -- by this point a consistency identity rather than an independent
test, since each count is already gated in its own phase -- and the entire chain is
bound to the input data by a cryptographic hash, so that checkpoints belonging to
different inputs cannot be mixed. The hash binding is a consistency device, not
tamper-proofing: in the certifying-algorithm trust model, the consumer of a
certificate does not trust shipped artefacts at all, but re-runs the verifier on the
published data (minutes on a laptop) and audits the verifier itself.

\subsection{What is trusted}
The certificate rests on: the $\approx700$-line verifier (published), the GMP rational
arithmetic library, IEEE-754 conformance of the hardware within the proven error
budgets, and the elementary facts quoted in Section~\ref{sec:prelim} and
Lemma~\ref{lem:complex}. It does \emph{not} rest on \textsc{Qhull}, on any
floating-point geometry, or on the software that produced the construction. A machine-checked
formalisation of the verifier's logic (e.g.\ in \textsc{Lean}) would be the natural next
rung on the assurance ladder and appears entirely feasible, since the mathematical
content is elementary.

\section{The human--AI workflow}\label{sec:workflow}

We now describe the genesis of our result in some detail because we believe the 
\emph{process} might be of independent interest.

\subsection{Division of labour}
The author posed the problem (construct an explicit mono-monostatic polyhedron;
then minimise faces; then certify), supplied the relevant literature, set the standards of
rigour, allocated computing resources, and redirected the strategy at some key
decision points. Everything else, i.e.~the smooth-body searches, the discretisation
experiments, the diagnosis of the obstructions of Section~\ref{sec:construction}, the
design of the equal-sag cover and of the compensation step, the floating-point
classifier and the certifier, was designed, implemented,
run and debugged by a general-purpose AI agent (Claude, Anthropic) over roughly $20$ working sessions, with the author reviewing summaries and directing at session
granularity. The agent also wrote the first draft of this paper.

Three episodes deserve record.

\subsubsection{Hypothesis testing}
When the naive discretisations failed, the working hypothesis ``the obstruction is
numerical precision'' was natural (and was the author's first guess). The agent's
decisive contribution was a numerical experiment: a local model of the ridge in
which the cone test of Observation~\ref{obs:hcrit} was evaluated in both double and
$50$-digit precision, demonstrating bit-identical margins with a sign change at a finite
cell size. This redirected the programme from arithmetic to geometry, i.e.~from ``more
digits'' to ``more faces, in the right places'', and ultimately to the cell
law~\eqref{eq:celllaw}.

\subsubsection{Verifying the verifier}
The first version of the certifier passed all its phases and produced the desired
certificate. Before accepting it, the agent subjected the verifier to an adversarial
review by independent instances of itself, prompted to \emph{refute} the certificate.
The review found three genuine soundness gaps: an ill-conditioned filter that could
have silently exempted tens of thousands of edges from exact adjudication, a reliance
on unverified adjacency data from the oracle, and a missing transfer bound from the
approximate to the exact centroid. All three were repaired (Section~\ref{sec:certification});
the repaired verifier reproduced the certificate, now soundly. We draw the obvious
moral: \emph{a certificate is only as good as its verifier, and verifiers written by
the same process that produced the object deserve independent hostile scrutiny.}

\subsubsection{Reward hacking}
During the (still unsuccessful) search for a mono-monostatic polyhedron with \emph{few}
faces, an optimiser reported a $19$-face solution within seconds. It was an artefact in
the most literal sense: an unbounded, combinatorially broken region (Euler
characteristic $-15$) on which the floating-point classifier's counts were meaningless;
the optimiser had found a hole in the sanity checks faster than it could find a
polyhedron. Numerical search over geometric objects offers many such traps, and we found
the only reliable antidote to be the one advocated throughout: hard invariant gates
during search, and exact certification before belief.

\subsection{Cost}
The entire programme consumed some days (maximum a couple of weeks) of a single laptop and of some batch
allocations on a compute cluster; the certification itself runs in minutes.

\section{Towards a better G\"omb\"ocedron}\label{sec:frontier}

Our construction is certainly far from optimal. We record, as honestly as we can, what
the search phase of this project established about the gap between $5\cdot10^4$ faces
and the conjectured optimum in the tens of faces.

\emph{The smooth-approximation route has a floor.} Within tangent covers of smooth
mono-monostatic bodies, the ridge band forces $\Theta(\sigma^2/\beta_{\min})$ faces, and
$\beta_{\min}\lesssim10^{-3}$ appears unavoidable for smooth $(1,0,1)$ bodies. Unless a
smooth seed with a radically larger ridge tilt exists (our searches say no), this route
cannot go far below $\sim5\cdot10^4$ faces. Few-face mono-monostatic polyhedra, if
found, will not resemble discretised G\"omb\"ocs.

\emph{Reshetov's body is one Morse cancellation away; but a distant one.} We certified,
running the verifier of Section~\ref{sec:certification} with expected signature
$(1,1,2)$ on a dyadic-rational realisation of the published integer data (the
floating-point normalisation and recentring perturb the data at relative scale
$10^{-16}$, while every sign decision of the certification exceeds $2.1\cdot10^{-7}$
in magnitude, with the smallest being recorded in the certificate, so the signature is
that of Reshetov's body), that the $14$-face unistable polyhedron of
\cite{Reshetov2014} has signature $(1,1,2)$: one surplus saddle--maximum pair. In Morse-theoretic terms the pair could be cancelled;
quantitatively, its \emph{persistence} (the gap in $\delta$ between the saddle and its
partner maximum) is enormous - roughly $24$ in units where the body has unit volume,
because the body is a $17:1$ elongated ``canoe'' whose surplus maximum is a far tip.
Cancellation would require un-elongating the body entirely, against the $13$
foot-ejection constraints that hold its unistability, each with relative margin only
$\sim10^{-3}$. Our attempts (structured perturbations, added degrees of freedom,
persistence-driven objectives) consistently confirmed this rigidity.

\emph{A chunky attractor at $F\approx17$.} Direct parametric searches over structured
half-space families (spiral and ``ring-cam'' polytopes with a designated stable facet
and apex vertex, guided by a graded objective that penalises surplus equilibria by their
geometric extinction depths and surplus pairs by their persistence) reliably produce
compact bodies with signatures $(2,3,3)$ and $(3,3,2)$ at around $17$ faces, with some
surplus pairs of very small persistence ($<10^{-2}$) - but the final cancellations have
so far resisted several hundred independent optimisation runs. Whether the obstruction
is again structural or merely a failure of our search remains open; we record the
question:

\begin{quote}
\emph{Does a (homogeneous) mono-monostatic polyhedron with at most $20$ faces exist? More modestly:
does one with fewer than $10^3$ faces exist?}
\end{quote}

Any candidate, from any method, can be certified by the verifier of
Section~\ref{sec:certification} in seconds at these sizes; we hope the availability of a certificate lowers the barrier for attempts on the record.

\section{Data and reproducibility}\label{sec:data}

The following artefacts accompany the paper: the half-space data of both certified
polytopes (with SHA-256 digests), the certificates (machine-readable JSON and
human-readable form), the exact $(1,1,2)$ certificate of the dyadic realisation of
Reshetov's body together with its half-space data, the verifier source, the
construction pipeline, and the smooth seed coefficients. The certification chain is deterministic; running it regenerates the
certificate from the half-space data alone. The construction pipeline is likewise
deterministic given its published parameters. Everything is available at \href{\repourl}{\nolinkurl{\repourl}}.

\appendix

\section{Algorithmic details}\label{app:algo}

\subsection{Cover generation}
Algorithm~\ref{alg:cover} summarises the construction of
Section~\ref{sec:construction}. The tilt field $\beta$, curvature field $\kappa$ and all
tangency data refer to the smooth seed $K_\ast$; ``maximal graded sample'' means a
greedy Poisson-disc thinning, finest cells first, of an adaptively refined candidate
pool whose local density everywhere exceeds the target~\eqref{eq:celllaw}. The
refinement of the candidate pool must carry each local minimum of $\beta$ in its
frontier (an early version dropped the minimiser itself from the refinement queue,
leaving an undetected hole of twenty times the critical size at the bottom of the
ridge - found only by the coverage audit of step 4, which we therefore regard as
non-optional).

\begin{algorithm}
\caption{Equal-sag adaptive tangent cover}\label{alg:cover}
\begin{algorithmic}[1]
\State locate the equilibria and the ridge of $K_\ast$; measure $\beta_{\min}$
\State build a graded candidate pool of directions, refined until the local spacing is
below $\rho(u)$ of \eqref{eq:celllaw} everywhere
\State greedily select a maximal subset with pairwise separations $\ge\rho$; place a
tangent plane at each selected direction; add the stable cap plane
\State \textbf{audit}: probe rings around every tilt minimum; verify no coverage gap
exceeds the local $\rho$; append tangent planes at any uncovered probe
\State measure the centroid of the built polytope; apply the dipole compensation
$h_i \mathrel{+}= \zeta\langle n_i,d\rangle w(\beta_i)$, resolving $\zeta$ by secant
iteration (two to three builds)
\State classify with the floating-point classifier; accept iff $(1,0,1)$ with robust
margins
\end{algorithmic}
\end{algorithm}

\subsection{The floating-point classifier}
The search-time classifier implements the tests of Section~\ref{sec:prelim} on the
boundary complex obtained from the polar-dual convex hull (the dual points $n_i/h_i$ lie
near a sphere, so the hull is well-conditioned even when the primal tangent planes are
nearly parallel). Facet feet are tested against all half-spaces; vertex cones by
non-negative least squares; edge wedges by planar cross-ratios. It was validated by
reproducing the known signatures of a suite of small polyhedra, including Reshetov's
$(1,1,2)$ -- a signature we subsequently certified in exact arithmetic
(Section~\ref{sec:frontier}), so this validation case no longer rests on the
classifier itself.

\subsection{The certifier}
The verifier runs in seven phases, each writing a checkpoint bound to the SHA-256 of
the input arrays: (0) oracle hull; (1) boundary complex certification
(Lemma~\ref{lem:complex}: exact orientation and independence determinants, derived edge
combinatorics, bulk feasibility filter at threshold $10^{-10}$ against a proven error
bound $<10^{-14}$, exact adjudication of all borderline pairs); (2) centroid enclosure
(exact tetrahedral moments, fixed-point accumulation at $2^{-320}$, enclosure radius
$\approx2\cdot10^{-93}$); (3) stable count (filter, then the single candidate verified
exactly against all planes); (4) unstable count (local three-inequality tests; all
filter survivors adjudicated exactly); (5) saddle count (numerator-filtered foot test,
sound for all edge lengths; \emph{every} survivor adjudicated exactly); (6) report,
which assembles the certificate, rechecks $S-H+U=2$ as a consistency gate (each count
is already gated in its own phase), and records the transfer guards $|q|>10^{-60}$ on
every exact decision together with the smallest guarded magnitude of the run. The
expected signature is a parameter of the verifier (default $(1,0,1)$), so the same
pipeline certifies or refutes any claimed signature; the $(1,1,2)$ certificate of
Reshetov's body (Section~\ref{sec:frontier}) is produced by the identical chain, in
under a second. Total runtime: under ten minutes for the larger body, under four for
the smaller, on a laptop. Table~\ref{tab:bodies} collects the certified quantities of
both bodies.

\begin{table}[h]
\centering
\begin{tabular}{lcc}
\toprule
 & $P_{272971}$ & $P_{56946}$ \\
\midrule
faces / edges / vertices & $272{,}971$ / $818{,}907$ / $545{,}938$ & $56{,}946$ / $170{,}832$ / $113{,}888$ \\
volume & $4.217893406\ldots$ & $4.218597429\ldots$ \\
stable-facet margin & $4.319\cdot10^{-3}$ & $4.650\cdot10^{-3}$ \\
unstable-vertex margins (min) & $1.748\cdot10^{-8}$ & $1.695\cdot10^{-8}$ \\
saddle candidates decided exactly & $52{,}958$ & $25{,}326$ \\
centroid enclosure radius & $\le1.01\cdot10^{-92}$ & $\le2.11\cdot10^{-93}$ \\
borderline feasibility pairs & $2{,}622$ & $233$ \\
\bottomrule
\end{tabular}
\medskip
\caption{The two certified mono-monostatic polytopes. Margins are the certified
minimal slacks, rounded down: stable $=\min_j\,(h_j-\langle n_j,q\rangle)$ over all
competing planes $j$ (the normals are unit to within $1\%$, so Euclidean point--plane
distances agree with these values to that factor), unstable $=$ the normal-cone
slacks $-\langle v-\tilde\com,\,w-v\rangle$ of Section~\ref{sec:prelim}.}
\label{tab:bodies}
\end{table}

\end{document}